\let \dis \displaystyle
\let \bb \mathbb
\let \rm \mathrm
\let \cal \mathcal 
\let \op \operatorname
\let \eps \varepsilon
\newtheorem{thm}{Theorem}
\numberwithin{thm}{section}
\newtheorem{prop}[thm]{Proposition}
\newtheorem{lem}[thm]{Lemma}
\newtheorem{rem}[thm]{Remark}
\def\BibTeX{{\rm B\kern-.05em{\sc i\kern-.025em b}\kern-.08em
    T\kern-.1667em\lower.7ex\hbox{E}\kern-.125emX}}
\begin{document}
\title{Tracking controllability for the heat equation}
\author{Jon Asier B\'arcena-Petisco,  Enrique Zuazua
\thanks{We would like to thank Professor Sebasti\'an Zamorano and the anonymous referees for their useful comments and valuable feedback. The work of J.A.B.P.
is funded by the Basque Government, under grant~IT1615-22; and 
by the project PID2021-126813NB-I00 funded by MICIU/AEI/10.13039/501100011033 and by ``ERDF A way of making Europe''.
The work of E.Z.  has been funded by the Alexander von Humboldt-Professorship program, the ModConFlex Marie Curie Action, HORIZON-MSCA-2021-DN-01, the COST Action MAT-DYN-NET, the Transregio 154 Project Mathematical Modelling, Simulation and Optimization Using the Example of Gas Networks of the DFG, AFOSR  24IOE027 project, grants PID2020-112617GB-C22, TED2021-131390B-I00 of MINECO and PID2023-146872OB-I00 of MICIU (Spain),
Madrid Government - UAM Agreement for the Excellence of the University Research Staff in the context of the V PRICIT (Regional Programme of Research and Technological Innovation). 
 }
\thanks{[1] Department of Mathematics, University of the Basque Country, 48080, Bilbao Spain (e-mail: jonasier.barcena@ehu.eus). }
\thanks{[2]  Chair for Dynamics, Control, Machine Learning, and Numerics (Alexander von Humboldt-Professorship), Department of Mathematics, Friedrich-Alexander-Universit\"at Erlangen-N\"urnberg, 91058 Erlangen, Germany. 
[3] Chair of Computational Mathematics, Fundaci\'on Deusto, Avenida de las Universidades 24, 48007 Bilbao, Basque Country, Spain.
[4] Departamento de Matem\'aticas, Universidad Aut\'onoma de Madrid, 28049 Madrid, Spain. (e-mail: enrique.zuazua@fau.de).}}

\maketitle

\begin{abstract}
We study the tracking or sidewise controllability of the heat equation. More precisely, we seek for controls that, acting on  part of the boundary of the domain where the heat process evolves, aim to assure that the normal trace or flux on the complementary set tracks a given trajectory. 

The dual  equivalent observability problem is identified. It consists on estimating the boundary sources, localized on a given subset of the boundary, out of boundary measurements on the complementary subset. 

Classical unique continuation and smoothing properties of the heat equation allow us proving  approximate tracking controllability properties and the smoothness of the class of trackable trajectories. 

We also develop a new transmutation method which allows to transfer known results on the sidewise controllability of the wave equation to the tracking controllability of the heat one. 

Using the flatness approach we also give explicit estimates on the cost of approximate tracking control. 

The analysis is complemented with a discussion of some possible variants of these results and a list of open problems. 
\end{abstract}

\begin{IEEEkeywords} 
linear systems, tracking controllability, linear system observers, optimal control
%heat equation, tracking and sidewise controllability, transmutation, wave equation, linear system observers, optimal control
\end{IEEEkeywords}

\section{Introduction}

In this paper we analyze the  tracking or sidewise controllability problem for the heat equation:
\begin{equation}\label{con:heat}
\begin{cases}
y_{t}-\Delta y=0 & \mbox{ in } (0,T)\times\Omega,\\
y=v1_\gamma & \mbox{ on } (0,T)\times\partial\Omega,\\
y(0)=y_0 & \mbox{ in }\Omega,
\end{cases}
\end{equation}
when $\Omega\subset\bb R^d$ is a given open bounded domain, $T>0$ is a given time horizon,
  $\gamma\subset\partial\Omega$ is a subset of the boundary,
  $v$ is the control, and $y_0$ is the initial value. 
Hereafter, we denote by $1_\gamma$ the characteristic function
of the set $\gamma$ of the boundary where the source term acts.

The \emph{sidewise or tracking controllability problem} is as follows: given
$\tilde \gamma\subset \partial\Omega$ (usually, but not necessarily,
$\tilde \gamma \subset\partial\Omega\setminus\gamma$),
%a time interval $I\subset\bb R^+$
 and a sufficiently regular target $w$, to find a control $v$ in an appropriate space such that:
\begin{equation}\label{eq:parny}
\partial_\nu y=w \ \ \ \ \mbox{ on } (0,T)\times\tilde\gamma,
\end{equation}
where $\nu$ denotes
the normal vector to $\partial\Omega$ pointing outwards. 
In other words, we seek to control the flux  on $(0,T)\times\tilde \gamma$ by acting on $(0,T)\times \gamma$. When such a control $v$ exists, so that \eqref{eq:parny}
is satisfied, the target $w$ is said to be \emph{reachable} or \emph{trackable}. 

Of course, analogous problems can be considered, with the same techniques, for other boundary conditions on the control and the target trace. For instance, we can replace the Dirichlet control, $y=v1_\gamma$, by the Neumann one, $\partial_{\nu}y=v1_\gamma$, and the target
$\partial_\nu y=w$ by $y=w$. 

The potential applications of this and similar control problems include  the goal-oriented and localized control of the temperature or its 
flux (see, for example, 
\cite{pereira2009lethal},
\cite{jerby2017localized}
and \cite{hannon2021effects}). These problems are relevant also in the context of population dynamics where the regulation of the flux of population across borders is often a sensitive and relevant issue, \cite{ruizzuazua}.

These problems are also relevant and can be formulated for other models, such as the wave equation.  Actually, we will establish a correlation between the tracking controllability of the heat and wave equations through a new   subordination or transmutation principle.

In the particular 1$d$ case, the reachable space for the heat equation has been analyzed in the pioneering work  
 \cite{laroche2000motion},  
by using power series representation methods in the context of motion planning. 
Other works on 1$d$ parabolic equations in which the control of boundary traces is discussed 
include \cite{dunbar2003motion}, \cite{lynch2002flatness}, \cite{martin2016null}, \cite{martin2016reachable} and \cite{schorkhuber2013flatness}. %all of them 1$d$.
 In the multi-dimensional setting 
 the  known results are only valid for cylinders
(see \cite{martin2014null}), where separation of variables can be employed, reducing the problem to the
1$d$ case.

In the present paper, first, in Section \ref{sec:gendom},
by duality, we transform the tracking controllability
 problem on its dual observability one, which consists on identifying
heat  sources on part of the boundary of the domain  out of 
measurements done on another observation subdomain. 
This observability problem differs from classical ones on the fact that, normally,  the initial data of the system is the object to be  identified.
 
 Duality, together with the Holmgren's Uniqueness Theorem,
 allows  to prove easily the approximate tracking controllability property, i.e. the fact that \eqref{eq:parny} can be achieved for all target up to an arbitrarily small $\varepsilon$ error.

Second, in Section \ref{sec:trasm}, using a new transmutation formula, inspired on the classical Kannai transform \cite{kannai1977off}, we show that the tracking controllability of the heat equation
is subordinated to the analogous property of the wave equation.
The tracking controllability of the wave equation has been mainly analyzed for $d=1$,  first in \cite{li2010exact} and \cite{li2016exact}, 
with constructive methods and then, in \cite{sarac2021sidewise}, \cite{zuazua}, by means of a duality approach inspiring this paper, and, finally, in \cite{Dehman2024boundary} in the multi-dimensional setting, employing microlocal analysis techniques.

A third contribution of this paper, presented in Section \ref{sec:segcyl}, concerns the quantification of the cost
of approximate controllability for the heat equation. This is done by
carefully analyzing power series representations, a method that, as mentioned above, has already been used to tackle the tracking control of the 1$d$ heat equation.

The results of this paper can be extended to other situations:  
the control may act on Neumann boundary conditions 
and aim at regulating the Dirichlet trace; the heat equation may involve variable coefficients; the model under consideration could be nonlinear, etc.
Section \ref{sec:opprob} is devoted to present some of these variants and other interesting and challenging open problems.

\section{Framework for tracking controllability}\label{sec:gendom}

In this section we formulate  the tracking controllability problem in an abstract setting,   taken from \cite[Section 2.3]{coron2007control}, to later apply it to the heat equation.
We refer the reader to \cite{sarac2021sidewise} for the corresponding wave-problem.

\subsection{An abstract setting}

Consider the abstract controlled model
\begin{equation}\label{con:abssys}
\begin{cases}
y_{t}=Ay+Bu,\\
y(0)=y_0,& 
\end{cases}
\end{equation}
the target being goal-oriented
\begin{equation}\label{eq:Ew}
Ey(t)=w(t)  \mbox{ on } (\tau,T),
\end{equation}
for some $\tau\geq0$, i. e. focusing on the projection of the state $y$ through the operator $E$. 

Here $A:D(A)\to Y$ is assumed to be the infinitesimal generator of a continuous semigroup, and $B:U\to D(A^*)'$ and $E:D(A)\to W$  bounded linear operators.
Moreover $Y$, $U$ and $W$ are Hilbert spaces endowed with the scalar products $\langle\cdot,\cdot\rangle_Y$,  $\langle\cdot,\cdot\rangle_U$ and $\langle\cdot,\cdot\rangle_W$ respectively.   

As it is classical in control problems, we consider 
the dual problem,
which reads as follows:
 \begin{equation}\label{eq:adjabstract}
 \begin{cases}
 -p_t=A^*p+E^*f,\\
 p(T)=0.
 \end{cases}
 \end{equation}
 
Based on the Hilbert Uniqueness Method (HUM),
 we can obtain the dual characterisation  of the problem of approximate sidewise or tracking controllability:

\begin{prop}[Duality for approximate  tracking control]\label{prop:absdualapprx}

 For all $w\in L^2(0,T;W)$ and $\eps>0$
there is a control $u\in L^2(0,T;U)$ such that the solution of
\eqref{con:abssys} satisfies
\begin{equation}\label{eq:approxheat}
\|Ey-w\|_{L^2(0,T;W)}<\eps,
\end{equation}
if and only if the following uniqueness or unique continuation property is satisfied: for all $f\in L^2(0,T;U)\setminus \{0\}$  
the solution $p_f$ of \eqref{eq:adjabstract} satisfies 
\begin{equation}\label{eq:pfnotnull}
B^*p_f\neq 0.
\end{equation}

When the equivalent unique continuation property above for the adjoint system holds, in the particular case where 
$y_0=0$ (which, by the linearity of the system, can be considered without loss of generality), the approximate control of minimal norm takes the form
$$v=B^*p_f, $$ where $f$ is
the minimizer of:
\begin{equation}\label{eq:defJf}
\begin{split}
J(f)=\frac{1}{2}\|B^*p_f\|^2_{L^2(0,T;U)}
&- \int_0^T\langle f,w \rangle_W dt 
\\&+\eps \|f\|_{L^2(0,T;W)}.
\end{split}
\end{equation}
\end{prop}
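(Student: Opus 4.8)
The plan is to reduce everything to a single duality identity relating the control-to-observation map with the adjoint state, and then to run the standard Hilbert Uniqueness Method. Throughout I take $y_0=0$, which is legitimate by linearity. First I would establish the \emph{fundamental identity}. Let $u\in L^2(0,T;U)$ produce the state $y$ of \eqref{con:abssys}, and let $f\in L^2(0,T;W)$ produce the adjoint state $p_f$ of \eqref{eq:adjabstract}. Differentiating $t\mapsto\langle y(t),p_f(t)\rangle_Y$ and inserting both equations, the terms $\langle Ay,p_f\rangle_Y$ and $\langle y,A^*p_f\rangle_Y$ cancel, leaving
\[
\frac{d}{dt}\langle y,p_f\rangle_Y=\langle u,B^*p_f\rangle_U-\langle Ey,f\rangle_W.
\]
Integrating on $(0,T)$ and using $p_f(T)=0$ together with $y(0)=0$ yields
\[
\int_0^T\langle Ey,f\rangle_W\,dt=\int_0^T\langle u,B^*p_f\rangle_U\,dt,
\]
which is precisely the adjoint relation for the linear map $\op{\cal L}\colon u\mapsto Ey$.

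The equivalence then follows at once. Approximate tracking controllability is exactly the density of the range of $\op{\cal L}$ in $L^2(0,T;W)$, and by Hahn--Banach this is equivalent to injectivity of the adjoint operator. The identity above shows that $f$ annihilates the range of $\op{\cal L}$ if and only if $\langle u,B^*p_f\rangle_U=0$ for every $u$, i.e. if and only if $B^*p_f=0$ in $L^2(0,T;U)$. Hence density holds precisely when $B^*p_f=0$ forces $f=0$, which is the unique continuation property \eqref{eq:pfnotnull}.

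For the constructive part I would show that the functional $J$ in \eqref{eq:defJf} attains its minimum. Convexity and continuity are immediate, since $f\mapsto B^*p_f$ is bounded and linear, so the first term is a continuous quadratic form, the second is affine, and $\eps\|\cdot\|_{L^2(0,T;W)}$ is convex. The crux of the whole argument is \emph{coercivity}. I would take a sequence with $\|f_n\|_{L^2(0,T;W)}\to\infty$, normalize $\hat f_n=f_n/\|f_n\|_{L^2(0,T;W)}$, and pass to a weak limit $\hat f_n\rightharpoonup\hat f$. Two cases arise: if $\liminf\|B^*p_{\hat f_n}\|_{L^2(0,T;U)}>0$, the quadratic term forces $J(f_n)/\|f_n\|_{L^2(0,T;W)}\to+\infty$; otherwise $B^*p_{\hat f_n}\to 0$, and passing to the weak limit gives $B^*p_{\hat f}=0$, so the unique continuation property \eqref{eq:pfnotnull} yields $\hat f=0$, the linear term vanishes in the limit, and $\liminf J(f_n)/\|f_n\|_{L^2(0,T;W)}\geq\eps>0$. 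In both cases $J(f_n)\to+\infty$, establishing coercivity; combined with convexity and lower semicontinuity this guarantees a minimizer $f$. This coercivity step, where \eqref{eq:pfnotnull} is genuinely used, is the main obstacle.

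Finally I would read off the control from the optimality condition. Writing the first-order condition for $J$ at the minimizer $f$ and treating the nonsmooth term $\eps\|\cdot\|_{L^2(0,T;W)}$ through its subdifferential, then substituting the fundamental identity with the choice $v=B^*p_f$ and $y$ the associated state, transforms $\langle B^*p_f,B^*p_g\rangle_{L^2(0,T;U)}$ into $\int_0^T\langle Ey,g\rangle_W\,dt$ for every test direction $g$. The condition becomes $\int_0^T\langle Ey-w,g\rangle_W\,dt$ balanced against a term bounded by $\eps\|g\|_{L^2(0,T;W)}$, which forces $\|Ey-w\|_{L^2(0,T;W)}\leq\eps$; running the argument with a marginally smaller parameter recovers the strict inequality in \eqref{eq:approxheat}. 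The same variational characterization shows that $v=B^*p_f$ has minimal $L^2(0,T;U)$-norm among all admissible controls.
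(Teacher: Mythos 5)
Your proof is correct and rests on exactly the same HUM duality identity as the paper's (your ``fundamental identity'' is the paper's \eqref{eq:dualcompint}--\eqref{eq:bdeq}), but you organize the argument differently in two respects. For the equivalence itself, the paper proves the direct implication by constructing the quasi-optimal control as the minimizer of $J$ and the converse by the orthogonality observation that $B^*p_f=0$ forces $\int_0^T\langle Ey,f\rangle_W\,dt=0$ for every control; you instead phrase approximate tracking controllability as density of the range of the input-to-observation map $u\mapsto Ey$ and invoke Hahn--Banach, which cleanly separates the qualitative equivalence from the variational construction. More substantively, you supply the coercivity proof for $J$ (normalizing $\hat f_n=f_n/\|f_n\|$, extracting a weak limit, splitting on $\liminf\|B^*p_{\hat f_n}\|$, and using unique continuation together with weak lower semicontinuity to conclude $\hat f=0$), a step the paper merely asserts with ``$J$ is strictly convex, continuous and coercive''; since this is the one place where the unique continuation hypothesis is genuinely needed for the existence of the minimizer, making it explicit is a real improvement. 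Your remark that the Euler--Lagrange inequality only yields $\|Ey-w\|_{L^2(0,T;W)}\leq\eps$, so that the strict inequality of \eqref{eq:approxheat} requires rerunning the argument with a slightly smaller parameter, also patches a small imprecision in the paper's own proof, which passes from the non-strict to the strict inequality without comment. The one point you assert rather than prove is the minimal-norm property of $v=B^*p_f$, but the paper does the same.
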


\medskip

\begin{proof}
Proposition \ref{prop:absdualapprx} is standard in the context 
of HUM (see  \cite{lions1988controlabilite}, and  \cite[Section 2.3]{coron2007control}).

As observed above, by the linearity of the system, it suffices to prove the approximate controllability
for $y_0=0$. 

Let us suppose that the unique continuation property holds for the adjoint system, i.e. \eqref{eq:pfnotnull} is satisfied for all
$f\in L^2(0,T;U)\setminus\{0\}$.
Then, $J$ is strictly convex, continuous and coercive, and it has
a unique  minimizer  $\tilde f\in L^2(0,T;U)$.

The Euler-Lagrange equations assure that, for  all $f\in L^2(0,T;U)$
and $\delta\neq0$,
\begin{equation}\label{eq:defpf}
\begin{split}
&\delta\int_{0}^T\langle B^*p_{\tilde f},B^* p_f\rangle_U dt
-\delta \int_{0}^T\langle f,w\rangle_Wdt
\\&+\eps(\|\tilde f+\delta f\|_{L^2(0,T;W)}-\|\tilde f\|_{L^2(0,T;W)})+O_{\delta\to0}(\delta^2)
\\&=J(\tilde f+\delta f)-J(\tilde f)\geq0.
\end{split}
\end{equation}
Moreover, if  $y$ is the solution of \eqref{con:abssys} with $y_0=0$
and $v=B^*p_{\tilde f}$, then:
\begin{equation}\label{eq:dualcompint}
\begin{split}
0&=\int_0^T\langle y_t-Ay-BB^*p_{\tilde f}, p_f\rangle_Y dt
\\&=\int_0^T \langle y,E^*f\rangle_Y
-\int_0^T \langle BB^*p_{\tilde f},p_f\rangle_Y dt,
%-\int_0^T \langle \partial_\nu y fdxdt\\&
%+\int_0^T\int_\gamma \partial_\nu p_{\tilde f}\partial_\nu p_f dxdt -
%\int_0^T\int_\Omega  y(\partial_tp_f+\Delta p_f)dxdt\\&=
%-\int_0^T\int_{\tilde\gamma} \partial_\nu y fdxdt
%+\int_0^T\int_\gamma \partial_\nu p_{\tilde f}\partial_\nu p_f dxdt,
\end{split}
\end{equation}
which implies that:
\begin{equation}\label{eq:bdeq}
\int_0^T\langle B^*p_{\tilde f},B^* p_f\rangle_U dt=
\int_0^T\langle  Ey,f\rangle_W dt.
\end{equation}
Thus, combining \eqref{eq:defpf}-\eqref{eq:bdeq}, 
the solution of \eqref{con:abssys} with control $v=B^*p_{\tilde f}$ satisfies:
\begin{equation}\label{eq:deltafunceps}
\begin{split}
&\delta\int_0^T\langle  Ey-w,f\rangle_W dt+O(\delta^2)
\\&\geq -\eps(\|\tilde f+\delta f\|_{L^2(0,T;W)}-\|\tilde f\|_{L^2(0,T;W)})
 \\&\geq -\eps|\delta| \|f\|_{L^2(0,T;W)}.
\end{split}
\end{equation}
Taking $\delta\to 0^+$ and $\delta\to0^-$, we obtain from \eqref{eq:deltafunceps} that:
\[\left|\int_0^T\langle  Ey-w,f\rangle_Wdt \right|
\leq\eps \|f\|_{L^2(0,T;W)},
\]
for all $f\in L^2(0,T;W)$, which implies
\eqref{eq:approxheat}.

Reciprocally, if $B^*p_f=0$ for some $f\neq0$, 
considering \eqref{eq:dualcompint}, $E y$ is orthogonal
to $f$ for all $v\in L^2(0,T;U)$, and
the system \eqref{con:abssys} is not approximately controllable.
\end{proof}

In a similar way, based on  HUM,
 we can  also obtain the dual characterization for exact sidewise or tracking controllability property:
\begin{prop}[Duality for exact tracking
controllability]\label{prop:dualapprx}

{ \em For all $w\in L^2(0,T;W)$ there is a control $f\in L^2(0,T;U)\setminus \{0\}$ such that the solution of
\eqref{con:abssys} satisfies:
\begin{equation}\label{eq:exactheat}
Ey=w,
\end{equation}
if and only if 
\begin{equation}\label{eq:ratioabs}
\sup_{f\in L^2(0,T;U)\setminus \{0\}} \frac{\|f\|_{L^2(0,T;W)}}
{\|B^*p_f\|_{L^2(0,T;U)}}<+\infty,
\end{equation}
for all solutions $p_f$ of \eqref{eq:adjabstract}.

When \eqref{eq:ratioabs} is satisfied,  the control of minimal norm (with $y_0=0$) is given by
$v=B^* p_f $, where $f$ is
the minimizer of:
\begin{equation}\label{eq:defJfexact}
J(f)=\frac{1}{2}\|B^*p_f\|^2_{L^2(0,T;U)}
- \int_0^T\langle f,w \rangle_W dt.
\end{equation}
}
\end{prop}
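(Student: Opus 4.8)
The plan is to follow the very same HUM strategy as in the proof of Proposition~\ref{prop:absdualapprx}, the only difference being that the observability inequality encoded in~\eqref{eq:ratioabs} now lets us dispense with the regularizing term and reach the target exactly rather than approximately. As before it suffices to treat $y_0=0$. I would first record that~\eqref{eq:ratioabs} is nothing but the observability inequality
\begin{equation}\label{eq:obsineq}
\|f\|^2_{L^2(0,T;W)}\le C\,\|B^*p_f\|^2_{L^2(0,T;U)},\qquad f\in L^2(0,T;W),
\end{equation}
for some finite constant $C$ independent of $f$.

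For the sufficiency I would work with the functional $J$ of~\eqref{eq:defJfexact}. By~\eqref{eq:obsineq} its quadratic part $\frac12\|B^*p_f\|^2_{L^2(0,T;U)}$ is coercive on $L^2(0,T;W)$, so $J$ is continuous, strictly convex and coercive, and admits a unique minimizer $\tilde f$. Writing the Euler--Lagrange equation exactly as in~\eqref{eq:defpf} but now with $\eps=0$ gives
\begin{equation}\label{eq:ELexact}
\int_0^T\langle B^*p_{\tilde f},B^*p_f\rangle_U\,dt=\int_0^T\langle f,w\rangle_W\,dt
\end{equation}
for every $f\in L^2(0,T;W)$. The transposition identity~\eqref{eq:bdeq}, applied to the state $y$ driven by the control $v=B^*p_{\tilde f}$, reads $\int_0^T\langle B^*p_{\tilde f},B^*p_f\rangle_U\,dt=\int_0^T\langle Ey,f\rangle_W\,dt$; comparing it with~\eqref{eq:ELexact} shows $\int_0^T\langle Ey-w,f\rangle_W\,dt=0$ for all $f$, hence $Ey=w$, which is~\eqref{eq:exactheat}. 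Minimality of the norm of $v=B^*p_{\tilde f}$ is then read off from the variational characterisation of $\tilde f$ in the usual way.

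For the necessity I would argue by duality at the operator level. Introduce the observation map $\mathcal L\colon L^2(0,T;W)\to L^2(0,T;U)$, $\mathcal Lf:=B^*p_f$, and the control-to-observation map $\mathcal C\colon L^2(0,T;U)\to L^2(0,T;W)$, $\mathcal Cv:=Ey$, with $y$ solving~\eqref{con:abssys} for $y_0=0$ and control $v$. Running the transposition computation of~\eqref{eq:dualcompint}--\eqref{eq:bdeq} with an arbitrary control $v$ in place of $B^*p_{\tilde f}$ yields
\begin{equation}\label{eq:transp}
\int_0^T\langle B^*p_f,v\rangle_U\,dt=\int_0^T\langle Ey,f\rangle_W\,dt,
\end{equation}
valid for all $v\in L^2(0,T;U)$ and $f\in L^2(0,T;W)$, which says precisely that $\mathcal C=\mathcal L^*$. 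Exact tracking controllability is the surjectivity of $\mathcal C$, whereas~\eqref{eq:obsineq} is the statement that $\mathcal L=\mathcal C^*$ is bounded below. The closed range theorem makes the two equivalent: a bounded operator between Hilbert spaces is surjective if and only if its adjoint is bounded below, closing the proof.

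The hard part will be this necessity direction, and more precisely the verification of the hypotheses of the closed range theorem in the present setting rather than the abstract theorem itself. One must ensure that $\mathcal L$ and $\mathcal C$ are genuinely bounded between the stated $L^2$ spaces, which for the heat equation hinges on the admissibility of the unbounded control operator $B\colon U\to D(A^*)'$ and on the regularity needed to make $B^*p_f$ a well-defined element of $L^2(0,T;U)$. Once these admissibility and trace estimates are granted, the functional-analytic equivalence is immediate and the minimisation of $J$ in~\eqref{eq:defJfexact} provides, constructively, the minimal-norm control.
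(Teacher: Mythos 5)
Your proposal is correct and follows exactly the route the paper intends: the paper omits the proof of Proposition~\ref{prop:dualapprx}, stating only that it is obtained ``in a similar way, based on HUM'' as Proposition~\ref{prop:absdualapprx}, and your sufficiency argument is precisely that proof with $\eps=0$ (coercivity now supplied by \eqref{eq:ratioabs} instead of the penalty term), while your necessity argument via $\mathcal C=\mathcal L^*$ and the closed range theorem is the standard completion of the paper's orthogonality remark. Your closing caveat about admissibility of $B$ and the boundedness of $f\mapsto B^*p_f$ is well placed, since the paper assumes these implicitly in the abstract setting.
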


\begin{rem}\label{rk:obsineg}
Despite of the abundant existing literature on Carleman inequalities to prove observability inequalities for heat-like equations, the authors are not aware of any results allowing to obtain inequalities of the form \eqref{eq:ratioabs}. In the context of the wave equation, this issue has been successfully addressed in the papers mentioned above, in which such inequalities have been derived using sidewise energy estimates and microlocal analysis tools.
\end{rem}

\subsection{Tracking control for the heat equation}\label{sec:obsprob}

The following result on the  sidewise or tracking  
approximate controllability of the heat equation is a consequence of  Proposition \ref{prop:absdualapprx}: 

\begin{prop}[Approximate tracking  control]\label{cl:trckcon}
{\em  Let $\Omega$ be a $C^1$ domain, $\gamma\subset
\partial\Omega$ be relatively open and non-empty, and 
$\tilde\gamma\subset\subset\partial\Omega\setminus\gamma$. 
Then, for all $w\in L^2((0,T)\times\tilde\gamma)$ and $\eps>0$
there is a control $v\in L^2((0,T)\times \gamma)$ such that the solution of
\eqref{con:heat} satisfies: %\eqref{eq:approxheat}.
\[\|\partial_\nu y-w\|_{L^2((0,T)\times\tilde\gamma)}\le \eps.\]
}
\end{prop}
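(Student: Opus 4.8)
The plan is to read \eqref{con:heat} together with the tracking target \eqref{eq:parny} as a concrete instance of the abstract system \eqref{con:abssys}--\eqref{eq:Ew} and then to invoke Proposition \ref{prop:absdualapprx}. First I would fix the functional setting: take $Y=L^2(\Omega)$ with $A$ the Dirichlet Laplacian (so that $A^*=A$ and $D(A)=H^2\cap H^1_0$), let $U=L^2(\gamma)$ with $B$ the boundary operator associated, in the transposition sense, with the inhomogeneous Dirichlet datum $v1_\gamma$, and let $W=L^2(\tilde\gamma)$ with $E$ the flux operator $y\mapsto\partial_\nu y|_{\tilde\gamma}$. The preliminary point to settle is well-posedness: for $v\in L^2((0,T)\times\gamma)$ the transposition solution $y$ of \eqref{con:heat} is, by parabolic smoothing, regular in the interior and up to the boundary away from $\overline\gamma$, and since the hypothesis $\tilde\gamma\subset\subset\partial\Omega\setminus\gamma$ forces a positive distance between $\tilde\gamma$ and $\gamma$, the flux $\partial_\nu y$ is a genuine $L^2$ function on $(0,T)\times\tilde\gamma$. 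This is precisely the role of the separation assumption, and it renders $E$ admissible in the sense demanded by Proposition \ref{prop:absdualapprx}.

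Second, I would identify the adjoint system and the operator $B^*$ explicitly. Writing Green's formula for the heat operator against \eqref{eq:adjabstract} and matching the boundary contributions, the adjoint reads, in the transposition sense,
\begin{equation*}
\begin{cases}
-p_t-\Delta p=0 & \mbox{in } (0,T)\times\Omega,\\
p=f1_{\tilde\gamma} & \mbox{on } (0,T)\times\partial\Omega,\\
p(T)=0 & \mbox{in }\Omega,
\end{cases}
\end{equation*}
and the same computation yields $B^*p_f=\partial_\nu p_f|_{\gamma}$, which is again a bona fide $L^2((0,T)\times\gamma)$ trace because $\gamma$ lies at positive distance from the support $\tilde\gamma$ of the boundary source. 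With these identifications in hand, Proposition \ref{prop:absdualapprx} reduces the claim to the unique continuation property \eqref{eq:pfnotnull}, i.e.\ to showing that if $f\in L^2((0,T)\times\tilde\gamma)$ satisfies $\partial_\nu p_f=0$ on $(0,T)\times\gamma$, then $f=0$.

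Third, I would establish this unique continuation statement. The decisive remark is that, since $\gamma$ and $\tilde\gamma$ are disjoint, the boundary condition already forces $p_f=0$ on $(0,T)\times\gamma$; combined with the standing hypothesis $\partial_\nu p_f=0$ there, the full Cauchy data of $p_f$ vanish on the lateral piece $(0,T)\times\gamma$. This surface is non-characteristic for the heat operator, as its space-time conormal is purely spatial, so extending $p_f$ by zero across $\gamma$ produces a solution of the heat equation on a larger cylinder that vanishes on an open set; Holmgren's Uniqueness Theorem, together with the space-like unique continuation property of the heat equation and the connectedness of $\Omega$, then forces $p_f\equiv0$ on $(0,T)\times\Omega$. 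Evaluating the now-vanishing Dirichlet trace gives $f1_{\tilde\gamma}=0$, that is $f=0$, which is exactly \eqref{eq:pfnotnull}, and Proposition \ref{prop:absdualapprx} concludes. I expect the main obstacle not to be the unique continuation itself, but the preliminary analysis: giving rigorous meaning in $L^2$ to the flux traces $\partial_\nu y|_{\tilde\gamma}$ and $\partial_\nu p_f|_{\gamma}$ for the rough, characteristic-function-type boundary data appearing in \eqref{con:heat} and in the adjoint, which is exactly where the compact inclusion $\tilde\gamma\subset\subset\partial\Omega\setminus\gamma$ and the smoothing effect of the heat semigroup become indispensable.
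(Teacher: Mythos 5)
Your argument is essentially the paper's own proof: you pass to the adjoint system \eqref{adj:heat} via the duality of Proposition \ref{prop:absdualapprx}, observe that $\partial_\nu p_f=0$ on $(0,T)\times\gamma$ together with the vanishing Dirichlet trace there gives full zero Cauchy data, extend by zero and conclude $f=0$ by Holmgren's theorem. The extra care you take with the functional setup ($B$, $E$, and the $L^2$ meaning of the normal traces under the separation hypothesis $\tilde\gamma\subset\subset\partial\Omega\setminus\gamma$) is sound and corresponds to what the paper defers to Remark \ref{rk:traksp}.
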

\medskip

\begin{proof}
The dual system of \eqref{con:heat}-\eqref{eq:parny}
reads as follows:
\begin{equation}\label{adj:heat}
\begin{cases}
-p_{t}-\Delta p=0 & \mbox{ in } (0,T)\times\Omega,\\
p=f1_{\tilde\gamma} & \mbox{ on } (0,T)\times\partial\Omega,\\
p(T)=0 & \mbox{ in }\Omega.
\end{cases}
\end{equation}
By Proposition
\ref{prop:dualapprx}, it suffices to prove that $\partial_\nu p_f=0$
on $(0,T)\times\gamma$ implies that $f=0$.
This is a consequence of  Holmgren's Uniqueness Theorem  (see, for instance, \cite[Theorem 8.6.5]{hormander2007analysis}), as can be easily checked by a classical argument. Indeed, if $\partial_\nu p=0$ on $(0,T)\times\gamma$, given that $p=0$ on $(0,T)\times\gamma$, $p$ together with all the first order derivatives of $p$ vanish on $\gamma$. Thus, we can extend the solution to $0$ on a small neighbourhood of $\gamma$, to an extended solution in an enlarged domain, vanishing on an open set. Thanks to Holmgren's Theorem, that assures the well known unique continuation property of solutions of the heat equation in an arbitrarily small time interval, we conclude that  $f=0$.
\end{proof}

\begin{rem}[Regularity of the trackable space]\label{rk:traksp}
As explained in \cite{fabre1995approximate}, it is not straightforward that 
$\partial_\nu y$ belongs to $L^2((0,T)\times\tilde\gamma)$. This can be proved by considering the solution of the heat equation as transposition.
In fact, due to the regularizing effect of the heat equation, we cannot 
expect that the trackable space contains irregular traces if 
$\tilde\gamma\subset\subset\partial\Omega\setminus\gamma$.
By means of  classical bootstrap arguments, as in \cite[Lemma 2.5]{doubova2002controllability}, it can be shown that the reachable space must be constituted by regular functions (notably, if $\Omega$ is a $C^\infty$ domain, the trace must be $C^\infty$). One should expect reachable targets  to be of Gevrey regularity, but determining the sharp space is an interesting open problem.
\end{rem}

\section{Transmutation for tracking controllability}\label{sec:trasm}

In this section we relate the tracking controllability properties of the heat  and the wave equations by using a variant of the
 Kannai transform (see \cite{kannai1977off}, \cite{miller2004geometric} and \cite{miller2006control}), 
which consists, roughly,  on averaging the solutions of the wave equation with
the heat kernel
 \begin{equation}\label{def:k}
k(t,s):= \frac{e^{-s^2/(4t)}}{\sqrt{4\pi t}}, 
\end{equation}
i.e.  the fundamental solution of the heat equation:
\begin{equation}\label{eq:kernel}
\partial_t k=\partial_{ss} k;\ \ \  k(0,s)=\delta_0(s).
\end{equation}

To be more precise, let us consider  the following control problem for the wave equation:
\begin{equation}\label{eq:trackwaved}
\begin{cases}
z_{ss}-\Delta z=0 & \mbox{ in }\bb R\times \Omega,\\
z(s,\cdot)= g1_\gamma & \mbox{ on }  \bb R\times \partial\Omega,\\
z(0,\cdot)=z_0& \mbox{ in }\Omega,\\
z_s(0,\cdot)=z_1& \mbox{ in }\Omega.
\end{cases}
\end{equation}
Note that in this wave equation the (pseudo-)time variable is denoted by $s \in \bb R$, to distinguish it from the real time-variable, $t$, along which the heat process evolves.

Here, $\Omega\subset\bb R^d$ is a $C^2$ domain, 
$g$ is the $L^2$-control and 
$(z_0,z_1)\in L^2(\Omega)\times H^{-1}(\Omega)$ the initial states. 

We define the functional space:
\begin{equation*}%\label{eq:defcalER}
\begin{split}
\cal E(\bb R; H):= &\left\{g\in L^\infty_{\op{loc}}(\bb R;H):
\forall\delta>0\ \exists C_\delta>0: \right. \\& \left. 
\|g(t)\|_{H}\leq C_\delta e^{\delta t^2} \ \forall t\in\bb R\right\},
\end{split}
\end{equation*} 
for a given Hilbert space $H$.

The adaptation of the Kannai or transmutation transform to this setting reads as follows:
\begin{prop}[Kannai transform]\label{lm:Kannaitrans}
{\em Let $\Omega$ be a $C^2$ domain, $\gamma\subset\partial\Omega$,
 $g\in\cal E(\bb R, L^2(\gamma))$, $z_0\in L^2(\Omega)$, $z_1=0$ and $z$ be the corresponding solution of 
\eqref{eq:trackwaved}. Then, 
\begin{equation}\label{eq:ytransf}
y(t,x)=\int_{-\infty}^{\infty}k(t,s)z(s,x)ds,
\end{equation}
is a solution of \eqref{con:heat} for $T=\infty$, $y_0=z_0$,
$$v(t,x):=\int_{-\infty}^{\infty}k(t,s)g(s,x)ds,$$
and it satisfies:
$$\partial_\nu y(t,x)=\int_{-\infty}^{\infty}k(t,s)\partial_\nu z(s,x)ds.$$
}
\end{prop}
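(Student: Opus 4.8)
The plan is to exploit the fact that the kernel $k$ from \eqref{def:k} is itself a solution of the heat equation in its two variables, $\partial_t k=\partial_{ss}k$ (see \eqref{eq:kernel}), while $z$ solves the wave equation $z_{ss}=\Delta z$ in the interior. The transform \eqref{eq:ytransf} should therefore intertwine the $(t,s)$-heat operator acting on $k$ with the $(s,x)$-wave operator acting on $z$, the bridge being a double integration by parts in the variable $s$. The underlying algebraic identity, which one may keep as a guide, is that for each frequency $\xi$ one has $\int_{-\infty}^\infty k(t,s)\,e^{i\xi s}\,ds=e^{-\xi^2 t}$, so a wave mode of frequency $\sqrt\lambda$ is sent to the heat mode $e^{-\lambda t}$; the direct computation below is simply the boundary-controlled, non-spectral version of this subordination.

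First I would record the quantitative estimates that make every manipulation legitimate. Since $g\in\cal E(\bb R;L^2(\gamma))$ and $(z_0,z_1)=(z_0,0)$ with $z_0\in L^2(\Omega)$, standard energy estimates for the boundary-controlled wave equation \eqref{eq:trackwaved} give that $z(s,\cdot)$ and $z_s(s,\cdot)$ lie in $\cal E(\bb R; L^2(\Omega))$ and $\cal E(\bb R; H^{-1}(\Omega))$ respectively, i.e. they grow at most sub-Gaussianly in $s$. Because $k(t,\cdot)$, together with $\partial_s k(t,s)=-\tfrac{s}{2t}k(t,s)$ and $\partial_t k$, decays like $e^{-s^2/(4t)}$ times a polynomial, for each fixed $t>0$ one may choose $\delta<1/(4t)$ in the definition of $\cal E$ and conclude that all the integrals in the statement converge absolutely and that differentiation under the integral sign (first in $t$, later in the normal direction) is justified by dominated convergence.

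Next I would verify that $y$ solves \eqref{con:heat}. Differentiating \eqref{eq:ytransf} under the integral and using $\partial_t k=\partial_{ss}k$ gives $\partial_t y=\int_{-\infty}^\infty \partial_{ss}k(t,s)\,z(s,x)\,ds$. Integrating by parts twice in $s$ produces the boundary contribution $[\partial_s k\,z-k\,\partial_s z]_{s=-\infty}^{s=+\infty}$, which vanishes thanks to the Gaussian decay of $k,\partial_s k$ against the sub-Gaussian growth of $z,z_s$; the remaining term is $\int_{-\infty}^\infty k(t,s)\,z_{ss}(s,x)\,ds=\int_{-\infty}^\infty k(t,s)\,\Delta z(s,x)\,ds=\Delta y$, so $y_t-\Delta y=0$ in $(0,\infty)\times\Omega$. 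The boundary condition is immediate: on $\partial\Omega$ one has $z=g1_\gamma$, hence $y|_{\partial\Omega}=\big(\int k(t,s)g(s,\cdot)\,ds\big)1_\gamma=v\,1_\gamma$. Finally, the initial datum follows from the approximate-identity property $k(t,\cdot)\to\delta_0$ as $t\to0^+$ in \eqref{eq:kernel} together with the continuity of $s\mapsto z(s,\cdot)$ in $L^2(\Omega)$ at $s=0$, giving $y(t,\cdot)\to z(0,\cdot)=z_0=y_0$ (the evenness of $k$ being what suppresses the odd-in-$s$ contributions governed by $z_1=0$).

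The normal-derivative formula is obtained by commuting $\partial_\nu$ with the integral in \eqref{eq:ytransf}, and I expect this last point to be the main obstacle, for two reasons. First, $\partial_\nu z$ is a trace of the wave solution and a priori need not be a classical function; it must be understood through the hidden (trace) regularity theory for the wave equation, which guarantees $\partial_\nu z\in L^2_{\op{loc}}$ on the lateral boundary. Second, to justify both the convergence of $\int k(t,s)\partial_\nu z(s,x)\,ds$ and its identification with $\partial_\nu y$, one needs a sub-Gaussian growth estimate in $s$ for this normal trace, that is, that $\partial_\nu z$ inherits membership in a class of type $\cal E$ from the hypothesis $g\in\cal E$. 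Establishing these growth bounds at the level of the boundary trace, rather than for $z$ in the energy space where they are routine, is the delicate step; once available, the commutation of $\partial_\nu$ with the $s$-integral follows again from dominated convergence and the Gaussian decay of $k$.
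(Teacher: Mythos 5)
Your proof follows essentially the same route as the paper's: differentiate under the integral, use $\partial_t k=\partial_{ss}k$, integrate by parts twice in $s$ (the boundary terms vanishing by the Gaussian decay of $k$ against the sub-Gaussian growth of the class $\cal E$), and read off the boundary and initial conditions from $z=g1_\gamma$ and $k(0,\cdot)=\delta_0$. You are in fact somewhat more careful than the paper, which justifies the integration by parts first for compactly supported $g$ and then invokes density, and which does not comment on the hidden-regularity and growth issues for the normal trace $\partial_\nu z$ that you correctly flag as the delicate point.
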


\begin{proof}
First, it is easy to see  that the function $y$ given by \eqref{eq:ytransf}
satisfies the boundary conditions of \eqref{con:heat}. 

Moreover, it
satisfies the initial condition because of \eqref{eq:kernel}. 

Finally, it is a solution of the heat equation. Indeed, if $g\in\cal D(\bb R\setminus\{0\}\times\gamma)$,
then for all $t\in(0,T)$ and $x\in(0,L)$ the following  holds:
\begin{equation*}
\begin{split}
y_t&=\int_{-\infty}^{\infty}k_t(t,s)z(s,x)ds
=\int_{-\infty}^{\infty}k_{ss}(t,s)z(s,x)ds
\\&=\int_{-\infty}^{\infty}k(t,s)z_{ss}(s,x)ds
=\int_{-\infty}^{\infty}k(t,s)\Delta z(s,x)ds
\\&=\Delta\left(\int_{-\infty}^{\infty} k(t,s)z(s,x)ds\right)
=\Delta y.
\end{split}
\end{equation*}
We have used \eqref{eq:kernel} in the second equality. Note also that
the integration by parts on the third equality is rigorous because 
$k$ decays exponentially when $s\to\infty$ and $v(s)$ grows at most linearly.

Finally, by density,  it follows that $$\int_{-\infty}^{\infty}k_{ss}(t,s)z(s,x)ds= \Delta\int_{-\infty}^{\infty} k(t,s)z(s,x)ds$$  for any $g\in \cal E(\bb R;L^2(\gamma))$, since, for all $t>0$, the function $e^{-s^2/(4t)}z$ decays quadratic exponentially when $s\to\infty$, so %we also have 
$y_t=\Delta y$.
\end{proof}

\begin{rem}
This transmutation identity allows to transfer the tracking controllability properties from the wave to the heat equation. In particular, in the 1$d$ setting, it  allows to achieve precise results, in combination with those in \cite{zuazua}. 

Indeed, if the control $g$ assures tracking the trace $h$ for the wave equation, then, the control 
$$
v(t,x)=\int_{-\infty}^{\infty}k(t,s)g(s,x)ds,
$$
allows to track the trace 
$$
w(t,x)=\int_{-\infty}^{\infty}k(t,s)h(s,x)ds,
$$
for the heat equation.
\end{rem}

\section{Tracking control of the 1$d$ heat equation}
 \label{sec:segcyl}

In this section we study the tracking controllability of
the 1$d$ heat equation by using the flatness approach. 
Notably, we study the solutions of:
\begin{equation}\label{con:heat0}
\begin{cases}
y_{t}-\partial_{xx} y=0 & \mbox{ in } (0,T)\times(0,L),\\
y(\cdot,0)=0 &\mbox{ on } (0,T),\\
y(\cdot,L)=v& \mbox{ on } (0,T),\\
y(0)=0 & \mbox{ in }(0,L).
\end{cases}
\end{equation}

First, we recall
how to compute explicitly the controls so that the solution of \eqref{con:heat0}
satisfies: 
\begin{equation} \label{tarjet:heat0}
\partial_x y(\cdot,0)=w  \ \ \ \  \mbox{ on } (0,T),
\end{equation}
where $w$ is a flat function. Then,
we use those controls to get an upper bound on the cost for approximate 
tracking controllability. In particular, we derive   
an upper bound for the norm of the control which, acting on $(0,T)\times\{L\}$,  assures that
\begin{equation}\label{tarjet:heatapprox0}
\|\partial_x y(\cdot,0)-w\|_{C^0([0,T])}\leq\eps.
\end{equation}

We prove the following result:

\begin{thm}[Cost of approximate tracking control]\label{tm:approxcost}
{ \em Let $L>0$, $T>0$ and $w\in W^{1,\infty}(0,T)$ be a 
function satisfying $w(0)=0$. Then,
for all $s\in (0,1)$ there is a constant $C=C(s)>0$ such that
 for all $\eps>0$ there exists a control $v_\eps$
satisfying
\begin{equation}\label{est:costveps}
\begin{split}
&\|v_\eps\|_{C^0([0,T])}\\&\leq C \exp\left[
C\left(\frac{\|w\|_{W^{1,\infty}(0,T)}}
{\eps}\right)^{1/s}\right]\|w\|_{W^{1,\infty}(0,T)},
\end{split}
\end{equation}
and such that the solution of \eqref{con:heat0} satisfies
\eqref{tarjet:heatapprox0}.}
\end{thm}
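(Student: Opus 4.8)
The plan is to combine the flatness (power-series) representation of solutions of \eqref{con:heat0} with a Gevrey regularization of the target $w$, and then to balance the regularization scale against the resulting control cost. Seeking $y(t,x)=\sum_{n\ge0}\frac{x^{n}}{n!}a_n(t)$ and imposing $y_t=\partial_{xx}y$ forces the recursion $a_{n+2}=a_n'$, while $y(\cdot,0)=0$ gives $a_0\equiv0$, so every even coefficient vanishes. Since $\partial_x y(\cdot,0)=a_1$, prescribing $a_1=:\omega$ yields
\[
y(t,x)=\sum_{n\ge0}\frac{x^{2n+1}}{(2n+1)!}\,\omega^{(n)}(t),\qquad v(t)=y(t,L)=\sum_{n\ge0}\frac{L^{2n+1}}{(2n+1)!}\,\omega^{(n)}(t),
\]
which tracks $\partial_x y(\cdot,0)=\omega$ exactly, provided $\omega$ is Gevrey of order $\sigma<2$ (so the series converges and may be differentiated termwise) and $\omega$ is flat at $t=0$ (so the initial condition $y(0)=0$ holds). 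The point is that $w\in W^{1,\infty}$ cannot be inserted directly, so I first smooth it.

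First I would build the regularization. Fixing $\sigma\in(1,2)$, to be tied to $s$ at the end, I take a Gevrey-$\sigma$ mollifier $\eta$ supported in $[0,1]$ with $\int\eta=1$ and $\|\eta^{(n)}\|_{L^1}\le CM^n(n!)^\sigma$. Extending $w$ by zero to $t<0$ (continuous, as $w(0)=0$) and setting $\omega_\rho:=w*\eta_\rho$ with $\eta_\rho(r)=\rho^{-1}\eta(r/\rho)$, the one-sidedness forces $\omega_\rho\equiv0$ on $(-\infty,0]$, hence $\omega_\rho$ is flat at $t=0$. The Lipschitz bound then gives the approximation estimate $\|\omega_\rho-w\|_{C^0([0,T])}\le C\rho\|w\|_{W^{1,\infty}}$, while placing all derivatives on the mollifier gives $\|\omega_\rho^{(n)}\|_{C^0}\le C\|w\|_{L^\infty}\rho^{-n}M^n(n!)^\sigma$.

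Next I would feed $\omega=\omega_\rho$ into the flatness formula and estimate the control. Using $(2n+1)!\ge(n!)^2$ and $\sigma-2=-s$, the cost collapses to a single Mittag-Leffler-type series,
\[
\|v_\rho\|_{C^0([0,T])}\le C\,L\,\|w\|_{L^\infty}\sum_{n\ge0}\frac{z^{n}}{(n!)^{s}},\qquad z:=\frac{L^2M}{\rho}.
\]
The main quantitative step, and where the profile $\exp[C(\cdot)^{1/s}]$ is produced, is the large-$z$ asymptotics of this series: a saddle-point count, with dominant term at $n\approx z^{1/s}$, gives $\sum_n z^n/(n!)^s\le C\exp(Cz^{1/s})$. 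This is the step I expect to be the main obstacle, since one must extract a clean exponential bound with $s$-dependent constants rather than mere convergence.

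Finally I would optimize in $\rho$. Choosing $\rho=\eps/(C\|w\|_{W^{1,\infty}})$ makes $\|\omega_\rho-w\|_{C^0}\le\eps$, so that $v_\eps:=v_\rho$ tracks the flux $\partial_x y(\cdot,0)=\omega_\rho$ and therefore realizes \eqref{tarjet:heatapprox0}. For this choice $z=C\|w\|_{W^{1,\infty}}/\eps$, and substituting into the cost estimate yields exactly \eqref{est:costveps}, with $s=2-\sigma\in(0,1)$ and a constant $C=C(s)$ absorbing $L$, $M$ and the Mittag-Leffler constant. It only remains to note that the derivative bounds with $\sigma<2$ make the series defining $y$ converge uniformly on $[0,T]\times[0,L]$, so that $y$ is a genuine solution of \eqref{con:heat0}.
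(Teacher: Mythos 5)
Your proposal is correct and follows essentially the same route as the paper: mollify $w$ (extended by zero to $t<0$) with a Gevrey cut-off of order $2-s$, feed the regularized target into the flatness series $v=\sum_i \frac{L^{2i+1}}{(2i+1)!}w_\delta^{(i)}$, reduce the cost to $\mathcal G_s(z)=\sum_i z^i/(i!)^s$ via $(2i+1)!\ge (i!)^2$, and bound $\mathcal G_s(z)\le C\exp(Cz^{1/s})$ before optimizing the mollification scale as $\eps/\|w\|_{W^{1,\infty}}$. The step you flag as the main obstacle is exactly the paper's Lemma on $\mathcal G_s$, proved there by splitting the sum at $i\approx 2ez^{1/s}$ and applying Stirling's formula, which is the saddle-point count you describe.
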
\noindent
The proof of Theorem \ref{tm:approxcost} consists on approximating 
the target with Gevrey functions in the $C^0$-norm, obtained by  convolution with cut-off functions, and estimating the controls for those approximating targets.

The requirement
of $w$ being  more regular than the approximation space is standard in approximation theory in infinite-dimensional spaces.

Before proving Theorem \ref{tm:approxcost}, we recall some
results about Gevrey functions, and
prove some technical estimates on an auxiliary cut-off function that will be employed in our discussion.
 
In this section $C>0$ is a generic constant that changes from line to line.

\subsection{Preliminaries}  

In this section we recall the known controllability
results for flat functions in 1$d$ and on  approximation rates by means of Gevrey cut-off functions. 

By definition $w$ is a Gevrey function 
of order $r$ if and only if 
$w\in C^\infty([0,T])$ and it satisfies for some $C,R>0$:
\[|w^{(i)}(t)|\leq C\frac{(i!)^r}{R^i}, \ \ \forall t\in[0,T], \forall i\geq0.
\]
When $r=1$ Gevrey functions are analytic.

\begin{lem}[Controls for flat targets]\label{lm:trackGevrey}
{\em Let $r\in [1,2)$, $L>0$, $T>0$
 and $w\in C^\infty([0,T])$ be a Gevrey function of order 
$r$ satisfying $w^{(i)}(0)=0$ for all $i\in\bb N$. Then,
there is a control $v$,  a Gevrey function of order $r$, such that
the solution of \eqref{con:heat0} satisfies 
\eqref{tarjet:heat0}.}
\end{lem}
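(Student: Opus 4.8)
The plan is to use the flatness (power-series) representation of solutions of \eqref{con:heat0}. I look for $y$ in the form
\[
y(t,x)=\sum_{i\ge0}\frac{x^i}{i!}\,a_i(t),
\]
and substitute into $y_t=\partial_{xx}y$. Matching powers of $x$ forces the recursion $a_{i+2}=a_i'$, so a solution is completely determined by the two \emph{flat outputs} $a_0(t)=y(t,0)$ and $a_1(t)=\partial_x y(t,0)$, through $a_{2i}=a_0^{(i)}$ and $a_{2i+1}=a_1^{(i)}$. The boundary condition $y(\cdot,0)=0$ imposes $a_0\equiv0$, and the tracking target \eqref{tarjet:heat0} imposes $a_1=w$. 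This yields the explicit candidate
\[
y(t,x)=\sum_{i\ge0}\frac{x^{2i+1}}{(2i+1)!}\,w^{(i)}(t),\qquad v(t):=y(t,L)=\sum_{i\ge0}\frac{L^{2i+1}}{(2i+1)!}\,w^{(i)}(t).
\]

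Next I verify that this formal construction defines a genuine classical solution. Using the Gevrey bound $|w^{(i)}(t)|\le C(i!)^r/R^i$ together with Stirling's formula, the general coefficient obeys
\[
\frac{L^{2i+1}}{(2i+1)!}\,|w^{(i)}(t)|\lesssim\Big(\frac{C_L}{i^{\,2-r}}\Big)^{i},
\]
which decays super-exponentially precisely because $r<2$. Hence the series for $y$, as well as those obtained by differentiating term by term once in $t$ and twice in $x$, converge absolutely and uniformly on $[0,T]\times[0,L]$; this legitimizes term-by-term differentiation, so $y$ is smooth and, by the recursion already checked, $y_t=\partial_{xx}y$. The boundary and initial conditions then follow directly: $y(\cdot,0)=0$ since only odd powers of $x$ appear, $\partial_x y(\cdot,0)=w$ since only the $i=0$ term survives at $x=0$, $y(\cdot,L)=v$ by construction, and $y(0,\cdot)=0$ because the flatness hypothesis $w^{(i)}(0)=0$ annihilates every term at $t=0$. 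This last point is exactly where flatness is used: it guarantees compatibility of the power-series solution with the zero initial datum.

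Finally I must show that $v$ itself is Gevrey of order $r$, which I expect to be the main obstacle. Differentiating the series $n$ times gives $v^{(n)}(t)=\sum_i\frac{L^{2i+1}}{(2i+1)!}w^{(i+n)}(t)$, and the delicate ingredient is to decouple the factorial $\big((i+n)!\big)^r$ arising from the Gevrey estimate of $w^{(i+n)}$. I would use the elementary bound $(i+n)!\le 2^{\,i+n}\,i!\,n!$ to factor out $(n!)^r\,(2^r/R)^n$, leaving a convergent tail $\sum_i\frac{L^{2i+1}}{(2i+1)!}(i!)^r(2^r/R)^i$ whose finiteness is again ensured by $r<2$. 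This produces an estimate of the form $|v^{(n)}(t)|\le \tilde C\,(n!)^r/\tilde R^{\,n}$, i.e. $v$ is Gevrey of order $r$, completing the proof. The restriction $r\in[1,2)$ is essential throughout: at $r=2$ the coefficients decay only geometrically, with ratio $\sim L^2/(4R)$, so convergence would hold merely for $L$ small, whereas for $r<2$ one obtains entire behavior in $x$ uniformly in $t$.
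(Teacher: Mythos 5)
Your proposal is correct and follows essentially the same route as the paper, which constructs the very same control $v(t)=\sum_{i\geq0}\frac{L^{2i+1}}{(2i+1)!}w^{(i)}(t)$ and solution $y(t,x)=\sum_{i\geq0}\frac{x^{2i+1}}{(2i+1)!}w^{(i)}(t)$ via the flatness ansatz, deferring the verification details to \cite{laroche2000motion}. Your write-up simply fills in those details (convergence for $r<2$ via Stirling, and the Gevrey bound on $v$ via $(i+n)!\leq 2^{i+n}\,i!\,n!$), and does so correctly.
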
\noindent

The proof of Lemma \ref{lm:trackGevrey} is mainly contained in 
\cite{laroche2000motion}. The procedure in \cite{laroche2000motion} consists on considering controls of the form  \begin{equation*}%\label{def:flatlinev}
v(t)= \sum_{i\geq0}\frac{L^{2i+1}}{(2i+1)!}w^{(i)}(t),
\end{equation*}
and the corresponding solutions of \eqref{con:heat0},  given by:
\begin{equation}\label{def:solyline}
y(t,x)=\sum_{i\geq0}\frac{x^{2i+1}}{(2i+1)!}w^{(i)}(t).
\end{equation}

Next, we recall the existence of 
cut-off  functions in Gevrey spaces:
\begin{lem}[Gevrey cut-off functions]\label{lm:defxi}
{\em Let $r>1$. There is a cut-off function $\xi$ supported
in $[0,1]$ of Gevrey order $r$ and satisfying $\int_0^1\xi(t)dt=1$.}
\end{lem}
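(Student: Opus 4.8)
The plan is to construct $\xi$ explicitly as a normalized bump function whose Gevrey regularity comes from a classical building block. The standard single-variable function with compact support and controlled derivative growth is
\begin{equation*}
\eta(t)=\begin{cases} \exp\!\left(-t^{-\beta}(1-t)^{-\beta}\right) & t\in(0,1),\\ 0 & t\notin(0,1),\end{cases}
\end{equation*}
for a suitable exponent $\beta>0$ tied to the target order $r$; this $\eta$ is smooth, supported in $[0,1]$, and vanishes to infinite order at both endpoints. First I would verify that $\eta$ is Gevrey of the desired order. The crucial point is to estimate $|\eta^{(i)}(t)|$ uniformly on $(0,1)$. I would do this via the Cauchy integral formula on a complex disk: extend $\eta$ holomorphically near each $t\in(0,1)$, choose the radius of the contour proportional to the distance from $t$ to $\{0,1\}$, and bound $|\eta|$ on the contour. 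This yields $|\eta^{(i)}(t)|\le C\,R^{-i}(i!)^r$ with $r=1+1/\beta$, so choosing $\beta=1/(r-1)$ produces exactly Gevrey order $r$.

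Once the derivative bound for $\eta$ is in hand, I would set $\xi=\eta/\!\int_0^1\eta(t)\,dt$. Normalization does not affect the Gevrey estimate (it only rescales the constant $C$), and it guarantees $\int_0^1\xi(t)\,dt=1$. The support condition and smoothness are immediate from the construction. This completes the proof modulo the derivative estimate.

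The main obstacle is the derivative bound for $\eta$, i.e. establishing the Gevrey order sharply. The endpoint singularities of the exponent make the real-variable Faà di Bruno combinatorics delicate, because the factors $t^{-\beta}$ and $(1-t)^{-\beta}$ blow up near the endpoints and must be balanced against the exponential decay of $\eta$ itself. The cleanest route is the complex-analytic one sketched above: the holomorphic extension of $\eta$ near an interior point $t$ stays bounded on a disk of radius comparable to $\mathrm{dist}(t,\{0,1\})$, and the Cauchy estimates convert this into the required $(i!)^r R^{-i}$ growth after optimizing the contour radius in $i$. I expect that carrying out this optimization carefully, and checking uniformity of the constants as $t$ approaches the endpoints where $\eta$ and all its derivatives tend to zero, is the only technically substantive step; the remainder is routine.
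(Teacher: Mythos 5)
Your candidate function $\exp(-t^{-\beta}(1-t)^{-\beta})$ with $\beta=1/(r-1)$ is identical to the one the paper exhibits, $\exp(-((1-t)t)^{-1/(r-1)})1_{(0,1)}$, and your proposed derivative estimate via Cauchy's integral formula on disks of radius comparable to $\mathrm{dist}(t,\{0,1\})$, followed by optimization (i.e.\ Stirling), is exactly the argument the paper attributes to its references (Ramis; Sch\"orkhuber et al., Lemma 4). The proposal is correct and takes essentially the same approach as the paper.
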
\noindent
In fact, the following function is of order $r$: $$\exp\left( \frac{-1}{((1-t)t)^{1/(r-1)} }\right)1_{(0,1)}.$$  
 This was  first proved in \cite{ramis1978devissage} using Cauchy's integral and Stirling's formula (see
\cite[Lemma 4]{schorkhuber2013flatness} for an English version). 

\subsection{Upper bounds for special auxiliary functions}
 In order to quantify the
cost we introduce the special auxiliary functions:
\begin{equation}\label{def:Gsx}
\cal G_s (x):= \sum_{i\geq 0}\frac{x^i}{(i!)^s}.
\end{equation}

These functions have an exponential growth:
\begin{lem}[Upper bounds for $\mathcal G_s$] \label{lm:bdexp}
{\em Let $s\in(0,1)$. Then, there is $C>0$ depending on $s$ such that:
\begin{equation}\label{est:bdGtheta}
\cal G_s(x)\leq C\exp\left(Cx^\frac{1}{s}\right), \ \ \ \forall x\geq0. 
\end{equation}}
\end{lem}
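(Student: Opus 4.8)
The plan is to estimate the growth of $\mathcal G_s(x) = \sum_{i\ge 0} x^i/(i!)^s$ by comparing it to the exponential function whose rate is controlled by the position where the summand peaks. The heuristic is that the dominant terms in the series occur near the index $i^*$ where consecutive terms balance, i.e.\ where $x/(i^*)^s \approx 1$, giving $i^* \approx x^{1/s}$; the value of the maximal summand then governs the overall size and is of the form $\exp(C x^{1/s})$. I would make this rigorous by one of two standard routes: either a direct term-by-term bound, or a comparison with $(\exp(x^{1/s}))$ raised to an appropriate power.

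First I would pursue the elementary comparison route. Using the superadditivity-type inequality for factorials, one has $(i!)^s \ge$ (a suitable lower bound) that allows termwise domination of $\mathcal G_s$ by a genuine exponential series. Concretely, I would seek a bound of the form $x^i/(i!)^s \le C^i \, a^i / i!$ for an appropriate $a$ depending on $x$, since then summing yields $\mathcal G_s(x) \le C' e^{Ca}$. To find the right $a$, note that $(i!)^s = (i!)/(i!)^{1-s}$, and by Stirling $(i!)^{1-s} \approx (i/e)^{i(1-s)}$; substituting gives
\begin{equation*}
\frac{x^i}{(i!)^s} \approx \frac{1}{i!}\left( x \cdot (i/e)^{1-s}\right)^i \cdot (\text{lower order}),
\end{equation*}
so the effective exponential argument is maximized and one recovers the $x^{1/s}$ scaling after optimizing in $i$. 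The cleanest way to package this is to split the sum at $i^* = \lceil (2x)^{1/s}\rceil$ (or a similar threshold): for $i > i^*$ the ratio of consecutive terms $x/(i+1)^s < 1/2$, so the tail is dominated by a geometric series and contributes at most a constant multiple of the last retained term, while the finite head of $i^*+1$ terms is each bounded by $\max_i x^i/(i!)^s$, whose logarithm I estimate via Stirling to be $O(x^{1/s})$.

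The most delicate step, and the one I expect to be the main obstacle, is obtaining a clean \emph{upper} bound on the maximal summand $\max_{i\ge 0} x^i/(i!)^s$ with the correct constant structure, uniformly for all $x \ge 0$. Taking logarithms, one must bound $\sup_i \bigl( i\log x - s\log(i!)\bigr)$, and the subtlety is that Stirling's formula $\log(i!) = i\log i - i + O(\log i)$ has an error term that, while harmless asymptotically, must be controlled carefully for small $i$ and for $x$ near $0$ or $1$ (where $\log x$ is negative or near zero). For small $x$ the bound $\mathcal G_s(x) \le C e^{Cx^{1/s}}$ holds trivially since the right-hand side is bounded below by $C$, so the real work is the regime $x$ large, where treating $i$ as a continuous variable and optimizing $f(i) = i\log x - s(i\log i - i)$ gives the critical point $i^* = x^{1/s}/e^{?}$ and optimal value of order $s \, x^{1/s}$; pinning down that the resulting constant $C$ can be chosen depending only on $s$ (not on $x$) is where care is needed.

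Once the maximal-term estimate is in hand, the conclusion is immediate: combining the geometric tail bound with the count of at most $i^*+1 \le C x^{1/s}+1$ head terms yields $\mathcal G_s(x) \le (C x^{1/s}+1)\exp(C x^{1/s})$, and the polynomial prefactor is absorbed into the exponential by enlarging the constant $C$, giving exactly \eqref{est:bdGtheta}. I would state the proof so that all constants are explicitly functions of $s$ alone, as required, and handle the boundary cases $x \in [0,1]$ separately by the trivial bound noted above.
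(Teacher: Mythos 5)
Your argument is correct and would go through, but it takes a genuinely different route from the paper. You bound the series by (number of head terms) times (maximal summand) plus a geometric tail, which requires you to actually estimate $\max_i x^i/(i!)^s$. The step you flag as delicate is in fact clean: using $i!\ge (i/e)^i$ one gets $x^i/(i!)^s\le \exp\bigl(i\ln x + is - is\ln i\bigr)$, and optimizing the exponent over real $i>0$ gives the critical point $i=x^{1/s}$ with optimal value exactly $s\,x^{1/s}$, with no Stirling error term to control; the polynomial prefactor $i^*+1\le (2x)^{1/s}+2$ is then absorbed as you say, and the small-$x$ regime is trivial. The paper instead proves the differential inequality $\mathcal G_s'(x)\le C x^{(1-s)/s}\mathcal G_s(x)$ for $x\ge 1$ and integrates the logarithmic derivative. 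Its split is at the same threshold $i\sim x^{1/s}$ and it uses the same Stirling-type tail estimate, but it avoids estimating the maximal term altogether: on the head it simply bounds the extra factor $(i+1)^{1-s}$ by $Cx^{(1-s)/s}$ and dominates the head sum by a multiple of $\mathcal G_s(x)$ itself. So the paper's approach trades your explicit Laplace-type maximal-term computation for a self-referential bound plus an integration step; yours is more direct and gives the constant $s$ in the exponent explicitly, while the paper's requires slightly less bookkeeping on the head of the sum. Either way the conclusion \eqref{est:bdGtheta} follows with $C=C(s)$.
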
\noindent

\begin{proof}
The proof consists on estimating $\partial_x[\ln(\cal G_s(x))]$
with Stirling's formula and  splitting the lower and  higher order terms 
of the sum.

In order to prove \eqref{est:bdGtheta} it suffices to prove that
there is $C>0$ such that:
\begin{equation}\label{est:Gthetaprim}
\cal G_s'(x)\leq Cx^{\frac{1-s}{s}}\cal G_s(x),  \ \ \ 
\forall x\geq 1.
\end{equation}
For that purpose, we remark that:
\[\cal G_s '(x)=\sum_{i\geq1}i^{1-s}\frac{x^{i-1}}{[(i-1)!]^s }
=\sum_{i\geq0}(i+1)^{1-s }\frac{x^i}{(i!)^s }.
\]
In order to prove \eqref{est:Gthetaprim} we split  
the terms into $i< 2x^{1/s}e$ and
$i\geq 2x^{1/s}e$. On the one hand,
if $i\geq 2x^{1/s}e$, from Stirling's formula we get that:
\begin{equation}\label{est:xiitheta}
\frac{x^i}{(i!)^s}\leq C \frac{x^ie^{is}}{i^{is}}
=C\left(\frac{x^{1/s}e}{i}\right)^{is}\leq C2^{-is}.
\end{equation}
Thus, from \eqref{est:xiitheta} we obtain that,  for all $x\geq1$,
\begin{equation}\label{est:sumupterm}
\sum_{i\geq 2x^{1/s}e}(i+1)^{1-s}\frac{x^i}{(i!)^s}
\leq C\leq Cx^{\frac{1-s}{s}}\cal G_s(x).
\end{equation}
On the other hand,  for all $x\geq1$ it holds
\begin{equation}\label{est:sumdownterms}
\begin{split}
\sum_{i< 2x^{1/s}e}(i+1)^{1-s}\frac{x^i}{(i!)^s}
&\leq  \sum_{i<2x^{1/s}e}(4e)^{1-s} x^{\frac{1-s}{s}}\frac{x^i}{(i!)^s}\\&\leq C x^{\frac{1-s}{s}}\cal G_s(x).
\end{split}
\end{equation}
Therefore, \eqref{est:Gthetaprim} follows from \eqref{est:sumupterm} and 
\eqref{est:sumdownterms}.
\end{proof} 
 
\begin{rem}\label{rem:sgeq1lem}
\eqref{est:bdGtheta} is also true if $s\geq1$, though we
do not use it in this paper. This can be proved as follows:
\[\begin{split}
\sum_{i\geq0}\frac{x^i}{(i!)^s}&=
\sum_{i\geq0}\left(\frac{(x^{1/s})^i}{i!}\right)^s\\&\leq
\left(\sum_{i\geq0}\frac{(x^{1/s})^i}{i!}\right)^s=
\exp(sx^{1/s}),
\end{split}
\]
using that, if $s\geq 1$, $k\in\mathbb N$ 
and $a_1,\ldots,a_k\geq0$, then 
$$(a_1^s+\cdots+a_k^s)\leq (a_1+\cdots+a_k)^s.$$
\end{rem}

\begin{rem}
When $s\in(0,1)$, in a similar way as
Remark \ref{rem:sgeq1lem}, we may show that:
\[\mathcal G_s(x)\geq \exp(sx^{1/s}).
\]
Thus, combining this with Lemma \ref{lm:bdexp} and using \cite[Proposition 4.3]{martin2016reachable}, we obtain that $\mathcal G_s$ are entire functions of order $1/(1-s)$. 
\end{rem}

\subsection{Conclusion of the proof of Theorem \ref{tm:approxcost}}

We now have all the ingredients to prove the upper bound of the cost
of approximate controllability. The proof of Theorem \ref{tm:approxcost}
is divided in two steps: first, we approximate the target $w$
by convolution with the cut-off function given in Lemma \ref{lm:defxi},
and, secondly, we apply the control given in Lemma \ref{lm:trackGevrey} 
and estimate  it with Lemma \ref{lm:bdexp}.

\begin{proof} Recall that  $C>0$ is a generic constant changing from line to line.

\textit{Step 1: Approximation of the target.}
Let $s\in (0,1)$ and $w\in W^{1,\infty}(0,T)$ a function
satisfying $w(0)=0$. %By linearity, we can suppose that:
%\begin{equation}\label{eq:wnorm}
%\|w\|_{W^{1,\infty}(0,T)}=1.
%\end{equation}
 Define $\xi_\delta:= \delta^{-1}\xi(x\delta^{-1})$,
where $\xi$ is the Gevrey function of order $r=2-s$
given in Lemma \ref{lm:defxi}. Set
\begin{equation}\label{def:weps}
w_\delta:= \tilde w\ast\xi_\delta=
\int_{t-\delta}^t\tilde w(t')\xi_\delta(t-t')dt',
\end{equation}
where $\tilde w$ is the extension
of $w$ by $0$ to $\bb R^-$.
Since $\xi$ is supported in $[0,1]$ and since $w_\delta=0$
in $(-\infty,0]$,  $w_\delta$ vanishes at 
$t=0$. Moreover, from $w(0)=0$ we get that:
\begin{equation}\label{eq:wdelta}
\begin{split}
|w_\delta(t) -w(t)|& \leq \sup_{t'\in (0,\delta)}|\tilde w(t-t')-w(t)|
\\&\leq \delta \|w\|_{W^{1,\infty}(0,T)} \ \ \forall t\in[0,T].
\end{split}
\end{equation}
Thus, from now on we consider:
\begin{equation}\label{eq:deltaeps}
\delta:=\frac{\eps}{\|w\|_{W^{1,\infty}(0,T)}},
\end{equation}
so \eqref{eq:wdelta} turns into:
\begin{equation}\label{eq:approxwC0}
\|w_\delta-w\|_{C^0([0,T])}\leq \eps . 
\end{equation}
Since $\xi$
 is a Gevrey function of order $r=2-s$, $w_\delta$ is easily seen to be a Gevrey function of order $r=2-s$ as well.
In fact, considering  \eqref{def:weps}
 we get that:
\begin{equation}\label{est:wepsN}
\|w_\delta\|_{C^i([0,T])}\leq \delta^{-i}\|\xi\|_{C^i([0,1])}
\|w\|_{W^{1,\infty}(0,T)},
\ \ \ \forall i\in\bb N.
\end{equation}
Thus, from the assumption that $\xi$ is a Gevrey function of
order $r=2-s$ and \eqref{est:wepsN} we deduce that:
\begin{equation}\label{est:wepsNcomplete}
\|w_\delta\|_{C^i([0,T])}\leq \left(\frac{C}{\delta}\right)^{i}(i!)^{2-s},
\ \ \ \forall i\in\bb N.
\end{equation}

\textit{Step 2: Cost of tracking control.}  
From Lemma \ref{lm:trackGevrey} we obtain that
 $\partial_xy(\cdot,0)=w_\delta$ with the control
 \begin{equation*}%\label{def:flatlinev}
v_\delta(t)= \sum_{i\geq0}\frac{L^{2i+1}}{(2i+1)!}w_\delta^{(i)}(t).
\end{equation*}
In particular, from \eqref{est:wepsNcomplete} we find that:
\begin{equation}\label{def:flatlinev}
\|v_\delta\|_{C^0([0,T])}\leq
\sum_{i\geq0}\left( \frac{C}{\delta}\right)^{i}
\frac{(i!)^{2-s}}{(2i+1)!}\|w\|_{W^{1,\infty}(0,T)} ,
\end{equation}
for $C$ a constant independent of $i$. Next, we consider
that:
\begin{equation}\label{est:facti}
\frac{(i!)^{2-s}}{(2i+1)!}\leq \frac{1}{(i!)^{s}},
\end{equation}
since:
\[\frac{(2i)!}{(i!)^2}={2i \choose i}>1.
\]
Thus, from  \eqref{eq:deltaeps}, \eqref{def:flatlinev}, and \eqref{est:facti}: 
\begin{equation}\label{eq:estcostvdelta}
\begin{split}
\|v_\delta\|_{C^0([0,T])}&\leq
\sum_{i\geq0}\left( \frac{C}{\delta}\right)^{i}\frac{(i!)^{2-s}}{(2i+1)!}\|w\|_{W^{1,\infty}(0,T)}
\\&\leq \dis\sum_{i\geq0}
\left(\frac{C}{\delta}\right)^{i}\frac{1}{(i!)^{s}}\|w\|_{W^{1,\infty}(0,T)}
\\&=\cal G_s\left(C\frac{\|w\|_{W^{1,\infty}(0,T)}}{\eps}\right)\|w\|_{W^{1,\infty}(0,T)}.
\end{split}
\end{equation}
Hence, we obtain \eqref{est:costveps} from \eqref{est:bdGtheta} and \eqref{eq:estcostvdelta}.
\end{proof}

  \section{Open problems}\label{sec:opprob}

As mentioned above, our techniques apply also for other boundary conditions.

  The method and results in this paper lead to some interesting
open problems and could be extended in various directions
(in addition to the ones proposed in Remarks \ref{rk:obsineg} and \ref{rk:traksp}). Namely:

\begin{itemize}
\item \textbf{Multi-dimensional domains.} 
Getting more precise quantitative results for the tracking control of the multi-dimensional heat equation is an interesting open problem. The combination of the results  in \cite{Dehman2024boundary} on the multi-dimensional wave equation and the transmutation formula above is a promising path. The results presented in \cite{strohmaier2022analytic}, which generalize those in \cite{darde2018reachable}, demonstrate that the reachable space is bounded between two spaces of holomorphic functions. These findings deserve also mention.

\item \textbf{Optimality on the cost of approximate controllability.}
One relevant open problem  is whether 
 the  upper bounds  in Theorem 
\ref{tm:approxcost} can be sharpened  to obtain $\exp(C\eps^{-1})$ or $\exp(C\eps^{-1/2})$,
 in line with the known bounds for the classical
approximate controllability problem of parabolic equations. 
Indeed,
the cost of driving the heat equation dynamics to a $L^2$-distance of the order of  $\eps$  to a target  function $y^T\in H^2(0,L)\cap H_0^1(0,L)$
is bounded  above by $\exp(C\eps^{-1/2})$ for the heat equation with constant
coefficients  (see \cite{fernandez2000cost}).
An estimate  of  the order $\exp(C\eps^{-1})$
 holds as well for more general heat equations
(see \cite{fernandez2000cost},
\cite{phung2004note} and \cite{boutaayamou2020cost}),
for the semi-linear heat equation (see \cite{yan2009cost}), for
the Ginzburg-Landau equation (see \cite{aramua2017cost}) and 
for the hypoelliptic heat equation (see \cite{laurent2017tunneling}). 
Their proofs are based on  observability inequalities
obtained through Carleman inequalities, with appropriate weight functions. This is an open issue in the context of sidewise or tracking controllability. 
\item \textbf{Sidewise observability estimates for the heat equation.} The
obtention of sidewise observability inequalities for system 
\eqref{adj:heat} remains open. This is closely related with the problem above of getting sharp bounds for approximate sidewise controllability. Whether Carleman inequalities can be adapted in this setting is not yet well understood.
It would also be interesting to describe whether the results in  Lemma 
\ref{lm:trackGevrey} in Gevrey spaces can be related to some sidewise observability inequality, \cite{dasgupta2014gevrey}. 
  
\item \textbf{Simultaneous tracking  and null control.}
The problem of finding controls that simultaneously drive the state to rest and assure the tracking control property is an interesting open problem. This problem has been successfully addressed in \cite{zuazua} for the 1$d$ wave equation.

\item \textbf{Other parabolic models.} 
It would also be interesting to analyse these problems for systems of parabolic equations and the Stokes system, for instance. One could also consider more general systems, as for instance, thermoelasticity, merging the behaviour of the wave and heat equations. But more systematic methods for sidewise and tracking controllability should be developed to be in conditions to tackle these problems.

 \end{itemize}

\section{Conclusions}\label{sec:concl}
In this paper we have studied the tracking controllability for the
heat equation and its relation with the sidewise controllability of
the wave equation.
We have shown that duality methods and classical results on unique continuation allow to prove approximate tracking
controllability properties for the heat equation in all space dimensions. 
We have also shown how  transmutation methods can be adapted to this setting, achieving the tracking control of the heat equation, out of the corresponding properties of the wave equation.

Revisiting the flatness approach we have also obtained estimates on the cost 
 of approximate tracking controllability for the 1$d$ heat equation.

In the future, efforts should be devoted to develop more systematic methods to tackle these problems and, in particular, the open questions mentioned in the previous section, among others.

\section*{References}

\bibliographystyle{plain}        % Include this if you use bibtex 
\bibliography{Nodalheat}

\end{document}